\theoremstyle{plain}
\newtheorem{thm}{Theorem}
\newtheorem{prop}[thm]{Proposition}
\theoremstyle{definition}
\theoremstyle{remark}
\newtheorem{rem}[thm]{Remark}
\def\R{ {\mathbf R} }
\def\Ex{ \mathbf{E} }
\numberwithin{equation}{section}
\begin{document}

\title{{\bf Tau functions of KP solitons realized in Wiener space}}
\author{Hidemi Aihara, Jir\^o Akahori, Hiroko Fujii and Yasufumi Nitta \\
Ritsumeikan University}
\date{July, 2011}
\maketitle
\begin{abstract}
In this paper, a probabilistic representation of 
the tau functions of KP (Kadomtsev-Petviashvili) 
solitons in terms of 
stochastic areas will be presented. 
\end{abstract}
\section{Introduction}
In the introduction, after giving a very short introduction to
the theory of solitons following \cite{MR1736222}, 
we recall some existing results 
from probabilistic approaches. 

\subsection{Solitons, tau-functions, and Sato's Grassmannian}\label{Sato}
By {\em solitons}, we usually mean {\em solitary wave}
solutions (behaving like a particle) 
to a class of non-linear wave equations including the KdV 
(Korteweg-de Vries) equation 
\begin{equation}\label{KdV}
\frac{\partial u}{\partial t}= \frac{1}{4} \frac{\partial^3 u}{\partial x^3} 
+ \frac{3}{2} u \frac{\partial u}{\partial x}
\end{equation}
as the most notable example. 


The first giant step in the study of solitons 
was made by Gardner, Greene, Kruskal and Miura \cite{GGKM}, 
where they observed that 
(i) 
the eigenvalues of 
the Shor\"odinger operator 
\begin{equation*}
\frac{\partial^2}{\partial x^2} + u (t,x),
\end{equation*}
where $ u $ is a solution to (\ref{KdV}),
is constant in time parameter $ t $, 
and (ii) one can construct a soliton solution to 
(\ref{KdV}) by applying the 
{\em inverse scattering method}, by which we mean the (mathematical)
method to construct (unknown) potentials out of given 
{\em scattering data},  
which had already been fully developed. 
The relation is most clearly seen when the potential is 
{\em reflectionless} as
\begin{equation}\label{ISM}
u (t,x) = 2\frac{d^2}{dx^2}\log \det( I + G(x,t)),
\end{equation}
where 
\begin{equation*}
G (x,t)
:= \left( \frac{\sqrt{m_{i}m_{j}}
e^{(\eta _{i}+\eta _{j})x + (\eta_i^3 + \eta_j^3) t}}{\eta _{i}+\eta _{j}} \right)_{1\leq i,j\leq n}.
\end{equation*}
The constants $ \eta_j, m_j $, $ j=1, \cdots, n $ are so-called
{\em scattering data}.  

The observation (i) together with the awareness of the existence
of the infinite invariants in \cite{GGKM} motivated another 
seminal paper by P. Lax \cite{MR0235310}, where 
the KdV equation (\ref{KdV}) is understood as the compatibility
between the two equations:
\begin{equation*}
\begin{cases}
\left(\frac{\partial^2}{\partial x^2} + u (t,x)\right)w (=:P w) = \kappa w, 
& \text{($ \kappa $ is an eigenvalue)}\\
\left( \frac{\partial^3}{\partial x^3} 
+ \frac{3}{2}u \frac{\partial}{\partial x}
+ \frac{3}{4} \frac{\partial u}{\partial x} 
\right) w (=: B w) = 0.
\end{cases}
\end{equation*}
This compatibility is rephrased as the celebrated ``Lax equation":
\begin{equation}\label{LaxKdV}
\frac{\partial P}{\partial t} + [P, B] = 0,
\end{equation}
where the bracket is the commutator; $ [P, B] = PB - BP $. 

By considering pseudo differential operators such as $ \partial^{-n} $
for $ n \in \mathbf{N} $ and their infinite series, 
we have in fact $ B = (P^{3/2})_+ $, 
where $ ( D )_+ $ is the differential operator part of the pseudo differential operator $ D $. In this Lax form, 
the existence of the infinite many invariants can be rephrased as 
\begin{equation*}
\frac{\partial P}{\partial x_k} + [P, (P^{k/2})_+] = 0, \quad k=1, 3, 5, \cdots,2n+1,\cdots,
\end{equation*}
where $ u \equiv u (x_1, x_3, \cdots, x_{2n+1}, \cdots ) $, a function 
of infinitely many variables. The KdV case (\ref{LaxKdV}) 
is retrieved by setting $ x_1= t, x_3 = x $. 
Each Lax equation generates a non-linear evolution equation with respect to
$ x_{2k+1} $ since 
$ [P, (P^{k/2})_+] $'s are all 
multiplication operators.  The totality of the generated equations 
is usually called {\em KdV hierarchy}. 

If we instead start with the operator 
\begin{equation*}
L = \partial + \sum_{j=1}^\infty u_j \partial^{-j}, 
\end{equation*}
then we still have that $ [L, (L^k)_+] $ are all multiplication operators, 
and hence we obtain infinitely many nonlinear differential equations 
with respect to $ u_j $'s of infinitely many variables $ x_1, x_2, \cdots, x_n,
\cdots $
by the Lax equations:
\begin{equation*}
\frac{\partial L}{\partial x_k} + [L, (L^{k})_+] = 0, \quad k=1, 2, \cdots.
\end{equation*}
The family is called {\em KP hierarchy} since the KP (Kadomtsev-Petviashvili) 
equation,
\begin{equation*}
\frac{3}{4}\frac{\partial^2 u_1}{\partial x_2^2}
= \frac{\partial}{\partial x_1} \left( \frac{\partial u_1}{\partial x_3} 
- \frac{3}{2} u_1 \frac{\partial u_1}{\partial x_1}
- \frac{1}{4} \frac{\partial^3 u_1}{\partial x_1^3} \right),
\end{equation*}
which is easily seen to be a generalization of the KdV to 
a two dimensional model, 
is deduced from the equations with $ k =2 $ and $ k=3 $. 
The KP hierarchy as a whole 
is also a generalization of the KdV hierarchy since 
the latter hierarchy is obtained
by a reduction $ ( L^2 )_- = 0 $ from the former. 

The equations in KP/KdV hierarchy are all ``soliton equations" 
in the sense that they all have exact solutions of soliton type
\footnote{The solitons are basically rational functions of 
the exponential functions of the form $ e^{\sum c_{ij} x_j} $
for some constants $ c_{ij} $'s.}. 
In fact, according to Sato's 
theory of infinite dimensional Grassmannian (\cite{MR730247}, see also \cite{MR1013125,MR1736222}), 
all the $ u_j $'s of the hierarchy are simultaneously generated from 
a single function called {\em tau-function} $ \tau $ in the following way:
determine $ w_1 $, $ w_2 $, etc, by 
\begin{equation}\label{waveft}
\frac{\tau (x_1-\frac{1}{k}, x_2- \frac{1}{2k^2}, \cdots)}
{\tau(x_1, x_2, \cdots)}
= 1 + \frac{w_1}{k} + \frac{w_2}{k^2}+ \cdots
\end{equation}
by comparing the coefficients of $ k^{-j} $, $ j=1,2, \cdots $,
and then $ u_1, u_2 $, etc by 
\begin{equation}\label{dmodule}
L=\left(1+\sum_{j=1}^\infty w_j \partial^{-j} \right) \circ \partial \circ
\left(1+\sum_{j=1}^\infty w_j \partial^{-j}
\right)^{-1}.
\end{equation}
For example, we have
\begin{equation}\label{U1}
u_1 = 2\frac{\partial^2}{\partial x_1^2} \log \tau. 
\end{equation}
In particular, we see that if $ \tau $ is a polynomial of $ e^{\sum c_{ij} x_j} $'s, then $ u_j $'s are all 
``solitons" in that they are all rational functions 
of $ e^{\sum c_{ij} x_j} $'s. 

The tau functions are characterized as a solution to 
a family of quadratic differential equations called {\em Hirota equations},
which are nothing but Pl\"ucker relations that define 
Sato's infinite dimensional  
Grassmannian. That is to say, a tau function of the KP hierarchy is a point in 
the Sato's Grassmannian.
It should be noted that in the Sato's theory, the KP hierarchy is the 
most universal one, out of which many well-known soliton equations are obtained by
a reduction. 

The following functions 
are known to be among the 
tau functions of the soliton solution of the KP equation:
\begin{equation*}
\begin{split}
& \tau (x_1, x_2, \cdots) \\
&= \sum_{J \subset I} \left(\prod_{ i \in J} m_i \right)
\left( \prod_{i,i' \in J, i<i'} \frac{(p_i-p_{i'})(q_i-q_{i'})}
{(p_i-q_{i'})(q_i-p_{i'})} \right)
\exp \left(\sum_{i \in J } \sum_{l=1}^\infty (p_i^l -q_i^l) x_l
\right)
\end{split}
\end{equation*}
for $ I = \{ 1, \cdots, n \} $, $ n \in \mathbf{N} $, 
where $ m_1, \cdots, m_n $, $ p_1, \cdots, p_n $, and 
$ q_1, \cdots, q_n $ are (indefinite) constants. This is alternatively
written as 
\begin{equation}\label{GISM}
\tau (x_1, x_2, \cdots) = \det (I + G (x_1, x_2, \cdots )),
\end{equation}
where 
\begin{equation*}
G (x_1, x_2, \cdots )= \left(\frac{\sqrt{m_im_j}}{p_i-q_j}
e^{\frac{1}{2} \sum_{l=1}^\infty \{(p_i^l -q_i^l)+ (p_j^l-q_j^l)\} x_l}  \right)_{i,j}.
\end{equation*}
The formula (\ref{GISM}) is a generalization of (\ref{ISM})
since we retrieve it by (\ref{U1}) 
and the reductions of $ q_j = -p_j $, $ x_l =0 $ for $ l \geq 4 $.

\begin{rem}\label{trivialfactor}
It should be noted that, if $ f $ is a solution to 
a Hirota equation 
then so is $ C e^{\sum_j c_j x_j} f $, 
for arbitrary constants $ C, c_1, c_2, \cdots $.
Therefore tau function is stable under the multiplication of 
the factor $ C e^{\sum_j c_j x_j} $. This property will be used
in the proof of Theorem \ref{KPS}. 
\end{rem}

\subsection{Probabilistic approach to solitons}
As far as we know, the first attempt to represent solitons
in terms of the expectation of Wiener functionals was made by
S. Kotani \cite{Kotani} in 2000. According to \cite{MR2083709}, 
Kotani constructed the following correspondences. 
Let $ \Sigma $ be the set of all pairs $ (\sigma_+, \sigma_-) \equiv \sigma $
of non-negative measures each on $ \mathbf{R}_- $ such that
$ \int_{\mathbf{R}_-} e^{\sqrt{-1}\lambda} \sigma_\pm (d \lambda) < \infty $
for any $ \lambda > 0 $. For $ \sigma \in \Sigma $, 
associate a Gaussian process
$ X^\sigma $ with mean $ 0 $ whose covariance $ C(u,v) = \Ex [X(u)X(v)] $
is given by 
\begin{equation*}
\begin{split}
&C(u,v; \sigma)
= \frac{1}{4} \int_{\mathbf{R}_-} (-z)^{-1/2} 
\left( e^{\sqrt{-z}(u+v)} - e^{\sqrt{-z}|u-v|}
\right) \sigma_+ (dz) \\
&\hspace{2cm}+ \frac{1}{4} \int_{\mathbf{R}_-} (-z)^{-1/2} 
\left( e^{-\sqrt{-z}|u-v|} - e^{-\sqrt{-z}(u+v)}
\right) \sigma_- (dz). 
\end{split}
\end{equation*}
Let $ \mathcal{Q} $ be the totality of the function $ q^\sigma $
with $ \sigma \in \Sigma $, where 
\begin{equation}\label{ISM-Gauss1}
q^\sigma (x) = -4 \frac{\partial^2}{\partial x^2} 
\log \Ex \left[\exp \left(-\frac{1}{2} \int_0^x |X^\sigma (y) |^2 ds
\right)
\right].
\end{equation}
Then Kotani showed that 
$ \mathcal{Q} $ is the closure (with respect to the topology 
of uniform convergence on compacts) of $ \cup \mathcal{Q}_n $, 
where $ \mathcal{Q}_n $ be totality of the reflectionless potentials 
of scattering data consisting of $ 2n $ constants. 

In a somewhat different line, K. Hara and N. Ikeda \cite{MR1869989} derived
from the Fourier transform of a class of quadratic Wiener functionals 
a dynamics in the Grassmannian as a finite dimensional analogue 
to the Sato's framework (\ref{waveft})-(\ref{dmodule}) etc. 

Soon after that N. Ikeda and S. Taniguchi \cite{MR2083709} 
obtained a specific and more ``stochastic analysis oriented" construction
of the Gaussian process $ X^\sigma $ in (\ref{ISM-Gauss1}) than 
Kotani's method.
They set 
\begin{equation}\label{TI1}
X^\sigma_t = \sqrt{a} \langle c, \xi^p_t \rangle,
\end{equation} where 
$ a > 0 $, 
$ c \in \mathbf{R}^n_+ $, $ p \in \mathbf{R}^n $
and $ \xi^p $ is an Ornstein-Uhlenbeck process
in $ \mathbf{R}^n $ starting at $ 0 $
defined as the soliton to the following SDE:
\begin{equation}\label{TI2}
d \xi_t = dW_t + \mathrm{diag}\{p_1, \cdots, p_n \} \xi_t dt.
\end{equation} 
The measure $ \sigma $ in Kotani's correspondence is given as
\begin{equation*}
\sigma_\pm  (du) = 2 a^2 \sum_{i: p_i \in \mathbf{R}_{\pm}}c^2_i 
\delta_{-p_i^2} (du). 
\end{equation*}
Related studies and surveys concerning the quadratic Wiener functionals 
can be found in \cite{MR1749870,MR2091362,MR2197979,MR2391539}, 
and more recently in \cite{MR2603056}.

Here we remark that 
all the probabilistic results cited here are on KdV solitons, and 
not extendable to KP. 
In this paper, we will present a probabilistic representation 
of KP solitons using generalized {\em stochastic areas} 
(see Theorem \ref{thm01} and Theorem \ref{KPS}). 

\subsection{Organization of the present paper}
In section \ref{GLSA}, we will introduce 
L\'evy's stochastic area formula
and present its generalization as Theorem \ref{thm01}
and its proof. Then in section \ref{PKP}, 
we will show that the generalized stochastic area formula 
is parameterized as a tau function of KP solitons. 
In section \ref{T-I}, we will give a probabilistic 
interpretation of the reduction from KP- to KdV-solitons. 
In section \ref{2D} we will present 
another realization, where
the dimension of the Wiener space is fixed to two.

\section{A generalization of L\'evy's stochastic area formula}\label{GLSA}
Let $ (\Omega, \mathcal{F}, P ) $ be a probability space 
and $ W \equiv (W^1, W^2 ) $ be a two-dimensional 
Brownian motion on it. 
The area enclosed by the curve $ s \mapsto W_s $ 
and its chord up to time $ t $,
which is usually called stochastic area of $ W $, 
is given by 
\begin{equation*}
S_t := \frac{1}{2}\left( \int_0^t W^2_s \,dW^1_s 
- \int_0^t W^1_s \,dW^2_s \right).
\end{equation*}
The characteristic function of $ S_t $
is explicitly given as 
\begin{equation}\label{Levy01}
\Ex [e^{\sqrt{-1} \xi S_t}]
= (\cosh \frac{\xi t}{2} )^{-1}  \quad (\xi \in \R), 
\end{equation}
and conditioned one is also given explicitly as
\begin{equation}\label{Levy02}
\Ex [e^{\sqrt{-1} \xi S_t}| W^1_t = x, W^2_t=y ]
= \frac{\xi t}{2 \sinh \frac{\xi t}{2} }e^{\frac{1}{2}
(x^2+y^2)(1-\frac{\xi t}{2}\coth \frac{\xi t}{2})}  \quad (\xi \in \R), 
\end{equation}
which were found by Paul L\'evy \cite{MR0044774}
using Fourier series expansion of $ W $. 
Either is often called L\'evy's (stochastic area) formula(s). 
There have been plenty of studies related to the formulas. 
For example, the heat kernel of the Heisenberg group can be
obtained by a slight modification of the formula 
(\cite{MR0461589}, see also \cite{IW}). 
Many alternative proofs and 
generalizations have been found 
(\cite{MR580140}, \cite{MR898500}, \cite{MR1007231}, 
\cite{MR1335477}, \cite{MR1266245}, 
etc).

In this paper, we give the following generalization of (\ref{Levy01}).
In its proof, the second L\'evy formula (\ref{Levy02}) plays a crucial role. 
\begin{thm}\label{thm01}
Let $W^l \equiv (W^{l,1}, W^{l,2}) $, $ l=1,\cdots,n $
be mutually independent two-dimensional 
Brownian motions starting at the origin,
and stochastic areas of $ W^l $ will be denoted by
\begin{eqnarray*}
S^l := \int^{1}_{0} \left( W^{l,2}_{s} dW^{l,1}_{s}
                   - W^{l,1}_{s} dW^{l,2}_{s} \right).
\end{eqnarray*}
Let $ \Lambda := \mathrm{diag} \{ \lambda_1, \cdots, \lambda_n \} $,
where $ \lambda_l, l=1,2, \cdots, n $ are positive numbers. 
Let $ A \equiv (a_{i,j})_{1 \leq i,j \leq n} $ be a real $ n \times n $ matrix, and 
$ C^{\pm} $ be its symmetric and skew-symmetric part respectively, 
namely, $ C^{\pm} = (A \pm A^*)/2 $. 
Denote 
$ \mathbf{W}^i_t = (W^{1,i}_{t} , \cdots ,W^{n,i}_{t}) 
$ for $ i=1,2 $, and define for $ z \in \mathbf{C} $
\begin{equation}\label{area01}
\begin{split}
\hat S (z) &\equiv 
\hat S_{A,\Lambda} (z) \\
& := z \sum^{n}_{l=1} {\lambda}_l S^l
+ z \langle \Lambda^{\frac{1}{2}} C^- \Lambda^{\frac{1}{2}} \mathbf{W}_1^1, \mathbf{W}_1^2 \rangle_{\mathbf{R}^{n}}
 - \frac{z^2}{2} \sum_{i=1,2} \langle
   \Lambda^{\frac{1}{2}} C^+ \Lambda^{\frac{1}{2}} \mathbf{W}_1^i, \mathbf{W}_1^i
   \rangle_{\mathbf{R}^{n}}.
\end{split}
\end{equation}
Then, if  either $ \max_l |\lambda_l| $ or $ \Vert C^+ \Vert $ 
is sufficiently small, we have 
\begin{equation*}
\begin{split}
& \Ex [ e^{ \hat S (\sqrt{-1}) }] \\
& = 
\begin{vmatrix}
\cosh \lambda_1 + a_{1,1} \sinh \lambda_1 &  a_{1,2} \sinh \lambda_2 
& \cdots & a_{1,n} \sinh \lambda_{n} \\
a_{2,1} \sinh \lambda_1 & \cosh \lambda_2 + a_{2,2} \sinh \lambda_2& \cdots 
& a_{2,n} \sinh \lambda_{n} \\
\vdots & \vdots & \ddots & \vdots \\
a_{n,1} \sinh \lambda_1 & a_{n,2} \sinh \lambda_2 & \cdots 
& \cosh \lambda_n + a_{n,n} \sinh \lambda_n 
\end{vmatrix}^{-1}.
\end{split}
\end{equation*}
\end{thm}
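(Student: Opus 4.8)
The plan is to reduce the $n$-dimensional expectation to an iterated, one-factor-at-a-time computation, using the independence of the $n$ Brownian motions $W^1,\dots,W^n$ together with the conditional L\'evy formula (\ref{Levy02}). First I would diagonalize the quadratic-form structure: writing $\hat S(\sqrt{-1})$ as a sum of the ``pure'' area terms $\sqrt{-1}\lambda_l S^l$ plus a term that is a quadratic form in the terminal values $\mathbf{W}_1^1,\mathbf{W}_1^2$ only, I would condition on the $\sigma$-algebra generated by all the $W^l_1=(W^{l,1}_1,W^{l,2}_1)$. Given these terminal values, the areas $S^1,\dots,S^n$ are conditionally independent, and each conditional characteristic function $\Ex[e^{\sqrt{-1}\xi S^l}\mid W^{l,1}_1=x,\,W^{l,2}_1=y]$ is given explicitly by (\ref{Levy02}) (with $t=1$, and with the factor-of-two bookkeeping in the definition of $S^l$ tracked carefully). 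This collapses the problem to a finite-dimensional Gaussian integral over $\mathbf{W}_1^1,\mathbf{W}_1^2\in\mathbf{R}^n$ whose integrand is the exponential of a quadratic form, times a deterministic prefactor $\prod_l \frac{\lambda_l}{2\sinh(\lambda_l/\text{(const)})}$ coming from the L\'evy formula.

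Next I would carry out that Gaussian integral. After conditioning, the exponent is a quadratic form in the $2n$ real variables $(\mathbf{W}_1^1,\mathbf{W}_1^2)$: it combines (i) the standard $-\tfrac12(|\mathbf{W}_1^1|^2+|\mathbf{W}_1^2|^2)$ from the Gaussian density, (ii) the $\coth$-terms produced by (\ref{Levy02}) for each coordinate, which contribute $-\tfrac{\lambda_l}{2}\coth(\lambda_l/\text{const})(\, (W^{l,1}_1)^2+(W^{l,2}_1)^2\,)$, (iii) the symmetric part $-\tfrac{(\sqrt{-1})^2}{2}\langle \Lambda^{1/2}C^+\Lambda^{1/2}\mathbf{W}_1^i,\mathbf{W}_1^i\rangle = +\tfrac12\langle\cdots\rangle$ for $i=1,2$, and (iv) the skew-symmetric cross term $\sqrt{-1}\langle\Lambda^{1/2}C^-\Lambda^{1/2}\mathbf{W}_1^1,\mathbf{W}_1^2\rangle$. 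Evaluating $\int e^{-\frac12\langle M v,v\rangle}\,dv / \int e^{-\frac12|v|^2}\,dv = (\det M)^{-1/2}$ over $v=(\mathbf{W}_1^1,\mathbf{W}_1^2)\in\mathbf{R}^{2n}$ and multiplying by the L\'evy prefactor, I expect the prefactor to combine with $(\det M)^{-1/2}$ so that the $\sinh$'s cancel and one is left with the reciprocal of a single $n\times n$ determinant. The block structure of $M$ (two $n\times n$ diagonal-plus-$C^+$ blocks coupled by the $C^-$ block) should factor, via a Schur-complement or a $\mathbf{C}^n$-rewriting $\mathbf{Z}=\mathbf{W}_1^1+\sqrt{-1}\mathbf{W}_1^2$, into $\det$ of an $n\times n$ matrix whose $(i,j)$ entry is exactly $\cosh\lambda_i\,\delta_{ij}+a_{ij}\sinh\lambda_j$ — matching $C^\pm$ reassembling into $A$.

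The small-parameter hypothesis (``$\max_l|\lambda_l|$ or $\|C^+\|$ sufficiently small'') enters to guarantee two things: that the Gaussian integral over $(\mathbf{W}_1^1,\mathbf{W}_1^2)$ actually converges, i.e.\ that the real part of the quadratic form $M$ is positive definite, and that the various manipulations (Fubini, analytic continuation in $z$ from small real $z$ to $z=\sqrt{-1}$, use of (\ref{Levy02}) which is itself only valid in a range) are justified; I would state the argument for small $z$ and small parameters and, if desired, extend by analyticity of both sides in $z$.

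The main obstacle I anticipate is the bookkeeping in step two: correctly assembling the $2n\times 2n$ matrix $M$ from the three sources (i)+(ii), (iii), (iv), and then showing its determinant equals the square of the target $n\times n$ determinant. The cleanest route is almost certainly to pass to complex coordinates $\mathbf{Z}=\mathbf{W}_1^1+\sqrt{-1}\,\mathbf{W}_1^2$, under which the skew term $C^-$ and the symmetric terms $C^+$ recombine into the single non-symmetric matrix $A=C^++C^-$, and the diagonal $\coth$-contributions together with the $-\tfrac12$ from the densities recombine — using $\coth\theta - 1 = \frac{2e^{-\theta}}{e^{\theta}-e^{-\theta}}$ and similar hyperbolic identities — into precisely $\cosh\lambda_i + a_{ii}\sinh\lambda_i$ on the diagonal and $a_{ij}\sinh\lambda_j$ off-diagonal, with the $\prod_l \sinh$ prefactor absorbed. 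Getting the hyperbolic algebra and the factors of $2$ to line up is the delicate part; everything else is a routine Gaussian computation once the conditioning and the use of (\ref{Levy02}) are set up.
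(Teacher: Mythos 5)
Your proposal is correct and follows essentially the same route as the paper's own proof: condition on the terminal values $\mathbf{W}_1=(\mathbf{W}^1_1,\mathbf{W}^2_1)$, apply the conditional L\'evy formula (\ref{Levy02}) to each independent planar Brownian motion, evaluate the resulting $2n$-dimensional Gaussian integral of a quadratic form as a $\det^{-1/2}$, and collapse the $2n\times 2n$ block determinant to the square of an $n\times n$ complex determinant --- your complex-coordinate step $\mathbf{Z}=\mathbf{W}^1_1+\sqrt{-1}\,\mathbf{W}^2_1$ is exactly the paper's use of the skew-symmetry of $C^-$, and recombines $C^++C^-$ into $A$ just as in (\ref{area04}). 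The only cosmetic difference is that the paper performs the computation at small real $\sigma$ (trigonometric form) and analytically continues to $\sigma=\sqrt{-1}$ at the end, using the smallness of $\max_l|\lambda_l|$ or $\Vert C^+\Vert$ for integrability, which is one of the two options you yourself indicate.
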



\begin{proof}
We first calculate the conditional expectation of $ e^{\hat S (\sigma)} $ 
conditioned by $ \mathbf{W}_1 = (\mathbf{W}^1_1, \mathbf{W}^{2}_1)$ .
By the L\'evy's formula (\ref{Levy02}) 
with some analytic continuation, 
we have for sufficiently small $ \sigma \in \mathbf{R} $
(such that the random variable $ e^{\sigma \sum \lambda_l S^l} $ is
integrable),  
\begin{equation}\label{LSAF2}
\begin{split}
&E[e^ { \sigma \sum_l \lambda_l S^l } | \mathbf{W}_1 ] \\
& =\prod_l \frac{ {\sigma \lambda}_l }{\sin {\sigma \lambda}_l }
                  \exp \Big( 
                           -\frac{(W^{l,1}_1)^2 + (W^{l,2}_1)^2}{2}
                          ( {\sigma \lambda}_l \cot{\sigma \lambda}_l  - 1)
                  \Big).
\end{split}
\end{equation}
Therefore we have
\begin{equation*}
E[e^{\hat S (\sigma)} | \mathbf{W}_1] 
= \prod_l \frac{ {\sigma \lambda}_l}{\sin {\sigma \lambda}_l}                \exp \Big(-\frac{1}{2}
                   \langle
                    (M(\sigma) - \mathbf{I} + C(\sigma)) \mathbf{W}_1,
                     \mathbf{W}_1
                   \rangle
\Big)
\end{equation*}
where 
\begin{equation*}
M (\sigma)  = 
\begin{pmatrix}
\sigma \Lambda \cot \sigma \Lambda & 0 \\
0 & \sigma \Lambda \cot \sigma \Lambda 
\end{pmatrix},
\end{equation*}
with
\begin{equation*}
\cot \sigma \Lambda:= \mathrm{diag} \{ \cot \sigma {\lambda}_1, \cdots,
\cot \sigma {\lambda}_n\}
\end{equation*}
as usual, and 
\begin{equation*}
C (\sigma) := 
\begin{pmatrix}
\quad \sigma^2 \Lambda^{\frac{1}{2}} C^+ \Lambda^{\frac{1}{2}} & \sigma \Lambda^{\frac{1}{2}} C^- \Lambda^{\frac{1}{2}} \\
-\sigma \Lambda^{\frac{1}{2}} C^- \Lambda^{\frac{1}{2}} & \sigma^2 \Lambda^{\frac{1}{2}} C^+ \Lambda^{\frac{1}{2}}
\end{pmatrix}. 
\end{equation*}

Since $ \Vert M (\sigma) + C (\sigma) -I \Vert \to 0 $ as 
$ \sigma \to 0 $, we can take $ \sigma $ small enough to ensure 
that $ M (\sigma ) + C (\sigma) $ is positive definite. 
Then, applying quadratic Gaussian formula for such $ \sigma $, we obtain 
\begin{equation}\label{area03}
\Ex [ e^{ \hat S (\sigma) }]
= \prod^n_{l=1} \frac{ \sigma {\lambda}_l}{\sin{\sigma \lambda}_l }
 \ \det (M (\sigma) + C(\sigma) )^{-\frac{1}{2}}.
\end{equation}
We may go further as 
\begin{eqnarray*}
&&\hspace{-1cm} \det (M (\sigma) + C(\sigma)) \\
&&\hspace{-1cm} = \det \left(
          \begin{array}{ccc}
          \sigma \Lambda^{\frac{1}{2}} ( \Lambda^{\frac{1}{2}} \cot \sigma \Lambda + \sigma C^+ \Lambda^{\frac{1}{2}}) 
& \sigma \Lambda^{\frac{1}{2}} C^- \Lambda^{\frac{1}{2}} \\
- \sigma \Lambda^{\frac{1}{2}} C^- \Lambda^{\frac{1}{2}} & \sigma \Lambda^{\frac{1}{2}} (\Lambda^{\frac{1}{2}} \cot \sigma \Lambda + \sigma C^+ \Lambda^{\frac{1}{2}})  \\
          \end{array}
          \right) \\
&&\hspace{-1cm} = \det (\sigma \Lambda^{\frac{1}{2}} ( \cot \sigma \Lambda +
\sigma C^+ + \sqrt{-1}C^-) \Lambda^{\frac{1}{2}}) \det (\sigma \Lambda^{\frac{1}{2}} ( \cot \sigma \Lambda + 
\sigma C^+ - \sqrt{-1}C^-) \Lambda^{\frac{1}{2}}) \\
&&\hspace{-1cm} (\text{Since $ C^- $ is skew symmetric}) \\
&&\hspace{-1cm} =  
\left\{\prod_{l} (\sigma \lambda_l)
\det  (\cot \sigma \Lambda + \sigma C^+ + \sqrt{-1} C^-)  \right\}^2.
\end{eqnarray*}
Hence (\ref {area03}) is turned into the following equality:
\begin{eqnarray}\label{area04}
&& \Ex [ e^{ \hat S (\sigma) }] 
= \det \Big( \cos \sigma \Lambda + 
(\sigma C^+ + \sqrt{-1} C^-) \sin \sigma \Lambda \Big)^{-1}
\end{eqnarray}
where $ 
\sin \sigma \Lambda:= \mathrm{diag} \{ \sin \sigma {\lambda}_1, \cdots,
\sin \sigma {\lambda}_n\}
$. 

The right-hand-side of (\ref{area04}) is apparently meromorphic 
in $ \sigma \in \mathbf{C} $.  
Now, we want to see if an analytic continuation to a domain 
including $ z= \sqrt{-1} $ is possible or not. 
To see this, it suffices to check the differentiability 
of the left-hand-side of (\ref{area04})
with respect to $ \sigma $. Namely, we need to check the integrability of
\begin{equation*}
\begin{split}
&\Ex [\frac{d}{d z} e^{\hat S (z)} ] \\
& \hspace{-.2cm}= \Ex \left[ e^{\hat S (z)} \left( 
\sum^{n}_{l=1} {\lambda}_l S^l
+ \langle \Lambda^{\frac{1}{2}} C^- \Lambda^{\frac{1}{2}} \mathbf{W}_1^1, 
\mathbf{W}_1^2 \rangle_{\mathbf{R}^{n}}
 - {z} \sum_{i=1,2} \langle
   \Lambda^{\frac{1}{2}} C^+ \Lambda^{\frac{1}{2}} \mathbf{W}_1^i, 
\mathbf{W}_1^i \rangle_{\mathbf{R}^{n}}
\right)
\right].
\end{split}
\end{equation*}
Since $ \hat{S} $ is quadratic Gaussian, the integrability 
is inherited from that of $ e^{\hat S (z)} $ itself, 
which is guaranteed if either $ \max_l |\lambda_l| $ or $ \Vert C^+ \Vert $ 
is sufficiently small. 
\end{proof}

\section{Parameterization as a tau function of KP solitons}\label{PKP}
As we have stated, a tau function $\tau$ of 
the $ n $-soliton solution of the Kadomtsev-Petviashvili equation 
(KP equation) is expressed by 
\begin{equation}\label{GISM2}
\tau (x_1, x_2, \cdots) = \det (I + G (x_1, x_2, \cdots )),
\end{equation}
with 
\begin{equation*}
G (x_1, x_2, \cdots )= \left(\frac{\sqrt{m_im_j}}{p_i-q_j}
e^{\frac{1}{2} (\xi_i + \xi_j )}  \right)_{1 \leq i,j \leq n},
\end{equation*}
where
\begin{eqnarray*}
& \xi_i = (p_i-q_i) x_1 + (p^2_i-q^2_i) x_2 + \cdots, \ \ i = 1, \cdots, n ,\\
\end{eqnarray*}
and $ m_i>0 $, $ p_i $ and $q_i$ are parameters.

\begin{thm}\label{KPS}
Let $ P = \Big( \frac{1}{p_i - q_j} \Big)_{1 \leq i,j \leq n}$, 
and assume that $ \min_{i,j} |p_i-q_j| $ is sufficiently large so that 
$ I+P $ is invertible. 
Then, if we put $ A = (I-P)(I+P)^{-1}$ 
and $ \Lambda := \mathrm{diag} \{ - \frac{1}{2} (\xi_1 + \log m_1),
\cdots, - \frac{1}{2} (\xi_n + \log m_n) \} $, we have that 
$ (\Ex [e^{\hat S_{A,\Lambda} (\sqrt{-1})}])^{-1} $, 
where $ \hat S_{A,\Lambda} $ is defined by (\ref{area01}),
defines a tau function of KP solitons. 
\end{thm}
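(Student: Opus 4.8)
The plan is to apply Theorem \ref{thm01} to the pair $(A,\Lambda)$ prescribed in the statement, to simplify the resulting determinant by exploiting that $A$ is a rational function of $P$ (hence commutes with $P$), and finally to recognize the outcome as the determinantal tau function (\ref{GISM2}) up to a constant and a trivial exponential factor, so that Remark \ref{trivialfactor} closes the argument. Concretely, Theorem \ref{thm01} gives $\big(\Ex[e^{\hat S_{A,\Lambda}(\sqrt{-1})}]\big)^{-1}=\det\!\big(\cosh\Lambda+A\sinh\Lambda\big)$, where $\cosh\Lambda=\mathrm{diag}(\cosh\lambda_i)$, $\sinh\Lambda=\mathrm{diag}(\sinh\lambda_i)$ and $A\sinh\Lambda$ has $(i,j)$-entry $a_{ij}\sinh\lambda_j$ (this is exactly the matrix displayed in the theorem).

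For the algebraic core: since $I-P$ and $(I+P)^{-1}$ are, respectively, a polynomial and a power series in $P$, they commute, so $(I+P)A=I-P$ and hence
\[
(I+P)\big(\cosh\Lambda+A\sinh\Lambda\big)=(I+P)\cosh\Lambda+(I-P)\sinh\Lambda=\mathrm{diag}(e^{\lambda_i})+P\,\mathrm{diag}(e^{-\lambda_i}).
\]
With $\lambda_i=-\tfrac12(\xi_i+\log m_i)$ one has $e^{-\lambda_i}=\sqrt{m_i}\,e^{\xi_i/2}$; writing $E:=\mathrm{diag}(\sqrt{m_i}\,e^{\xi_i/2})$ we get $\mathrm{diag}(e^{\lambda_i})=E^{-1}$ and $G=EPE$, so the right-hand side equals $E^{-1}+PE=E^{-1}(I+G)$. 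Taking determinants,
\[
\big(\Ex[e^{\hat S_{A,\Lambda}(\sqrt{-1})}]\big)^{-1}=\frac{(\prod_i m_i)^{-1/2}}{\det(I+P)}\,\exp\!\Big(-\tfrac12\sum_{l\ge1}\big(\textstyle\sum_i(p_i^l-q_i^l)\big)x_l\Big)\,\tau(x_1,x_2,\dots),
\]
which is $\tau$ multiplied by a constant and by $e^{\sum_l c_l x_l}$ with $c_l=-\tfrac12\sum_i(p_i^l-q_i^l)$. By Remark \ref{trivialfactor} this is again a tau function of the KP hierarchy, of soliton type since $\tau$ is. Everything up to this point is bookkeeping with determinants.

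The step requiring care is the invocation of Theorem \ref{thm01}: there the $\lambda_l$ are positive and either $\max_l|\lambda_l|$ or $\Vert C^+\Vert$ must be small, whereas here $\lambda_i=-\tfrac12(\xi_i+\log m_i)$ depends on the flow variables and is of arbitrary size and sign, and $C^+$ (the symmetric part of $A=(I-P)(I+P)^{-1}$) is close to $I$, not small, when $\min_{i,j}|p_i-q_j|$ is large. I would resolve this by noting that both sides of the identity above are real-analytic in $(\lambda_1,\dots,\lambda_n)$: the right-hand side is entire, and the left-hand side is analytic on the region where $e^{\hat S_{A,\Lambda}(\sqrt{-1})}$ is locally uniformly integrable (by the differentiation-under-the-integral-sign argument used in the proof of Theorem \ref{thm01}), a region containing a neighbourhood of $\Lambda=0$ because there $\Vert\Lambda^{1/2}C^+\Lambda^{1/2}\Vert<1$; hence the identity, valid near $\Lambda=0$, propagates. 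Finally, since the tau/Hirota property is a property of the entire function $\det(\cosh\Lambda+A\sinh\Lambda)=C e^{\sum_l c_l x_l}\tau$ — which is what the expectation computes — it is legitimate to conclude that $\big(\Ex[e^{\hat S_{A,\Lambda}(\sqrt{-1})}]\big)^{-1}$ defines a tau function of KP solitons.
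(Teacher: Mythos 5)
Your proposal is correct and follows essentially the same route as the paper: apply Theorem \ref{thm01}, use the Cayley-type relation between $A$ and $P$ to rewrite $\det(\cosh\Lambda + A\sinh\Lambda)$ as a constant times $e^{\sum_l c_l x_l}$ times $\det(I+G)=\tau$, and conclude via Remark \ref{trivialfactor}; the only cosmetic difference is that you left-multiply by $I+P$ where the paper factors out $(I+A)e^{\Lambda}$ and uses $P=(I+A)^{-1}(I-A)$. Your closing discussion of the domain of validity of Theorem \ref{thm01} (the sign and size of the $\lambda_i$ and the size of $C^+$, handled by analyticity near $\Lambda=0$) is extra care that the paper's proof silently omits.
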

\begin{proof}
Since 
\begin{equation*}
G = e^{-\Lambda} P e^{-\Lambda}, 
\end{equation*}
we have 
\begin{equation*}
\begin{split}
\tau &= \det ( I + e^{-\Lambda}  P e^{-\Lambda} ) \\
&= \det e^{-\Lambda}  \det ( e^{\Lambda}  + P e^{-\Lambda} ) 
= \det ( I + P e^{-2\Lambda} ).
\end{split}
\end{equation*}
On the other hand, 
\begin{equation*}
\begin{split}
& \det (\cosh \Lambda + A \sinh \Lambda) \\
& = \det \left( \frac{e^{\Lambda} + e^{-\Lambda}}{2} + A 
\frac{e^{\Lambda} - e^{-\Lambda}}{2} \right) \\
& = 2^{-n} \det \left\{ (I+A)e^{\Lambda}+(I-A) e^{-\Lambda} 
\right\} \\
& = 2^{-n} \det \{ (I + A) e^{\Lambda}\}
           \det \Big( I + (I + A)^{-1}(I - A) 
e^{-2 \Lambda}) \Big) \\
& = 2^{-n} \det (I + A) e^{-\frac{1}{2} \sum (\xi_i + \log m_i)}
\det \Big( I + P e^{-2 \Lambda} \Big).
\end{split}
\end{equation*}
The last equality follows since $$ A = (I-P)(I+P)^{-1} 
\iff P=(I + A)^{-1}(I - A). $$
As we have stated in Remark \ref{trivialfactor}, 
$ 2^{-n} \det (I + A) e^{-\frac{1}{2} (\xi_i + \log m_i) } $ 
is a trivial factor 
and thus by Theorem \ref{thm01} we have the assertion. 
\end{proof}

\section{Reduction to Ikeda-Taniguchi's construction}\label{T-I}
As we have discussed in section \ref{Sato}, 
we have (\ref{ISM}) by the reduction of $ q_j =-p_j $ in (\ref{GISM}).
In this section, we review this from the perspective of 
stochastic analysis. We will show that 
when $ C^- = 0 $, the expectation of the exponential of 
the generalized stochastic area is reduced to 
that of the exponential of the time integral of an 
Ornstein-Uhlenbeck process, which corresponds to the Taniguchi-Ikeda's
construction (\ref{ISM-Gauss1}), (\ref{TI1}) and (\ref{TI2})   
of reflectionless potentials/tau functions of KdV solitons. 

Precisely speaking, we have the following
\begin{prop}
Suppose that $ A $ in Theorem \ref{thm01} is symmetric. Then
\begin{equation*}
\Ex [e^{\hat S_{A,\Lambda}(\sqrt{-1})}] 
= \left(\Ex [e^{-\int_0^1 X^{A, \Lambda}_s ds}]
\right)^2 e^{\mathrm{tr} \Lambda A},
\end{equation*}
where $ X^{A, \Lambda} = \langle (\Lambda- A \Lambda A) \xi, \xi \rangle $ 
and $ \xi $ 
is an Ornstein-Uhlenbeck process on $ \mathbf{R}^d $ starting at $ 0 $
and satisfying
\begin{equation}\label{OUn}
d\xi_t = \Lambda^{\frac{1}{2}} dB_t + \Lambda A \xi_t \,dt,
\end{equation}
with $ B $ being an $ n $-dimensional standard Brownian motion.
\end{prop}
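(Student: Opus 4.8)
The plan is to specialize the already‐proven formula \eqref{area04} to the symmetric case $C^-=0$, where $C^+=A$, and then re-express the resulting Gaussian integral as one over an Ornstein–Uhlenbeck functional. Recall that in the proof of Theorem \ref{thm01} the key computation, before the final meromorphic continuation, is the identity \eqref{area03}:
\begin{equation*}
\Ex[e^{\hat S(\sigma)}] = \prod_{l=1}^n \frac{\sigma\lambda_l}{\sin\sigma\lambda_l}\,\det(M(\sigma)+C(\sigma))^{-\frac12}.
\end{equation*}
When $A$ is symmetric, $C^-=0$ and $C(\sigma)$ becomes block-diagonal with both diagonal blocks equal to $\sigma^2\Lambda^{1/2}A\Lambda^{1/2}$; likewise $M(\sigma)$ is block-diagonal. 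Hence $M(\sigma)+C(\sigma)$ is block-diagonal with two identical $n\times n$ blocks, and $\hat S(\sigma)$ itself splits as $\hat S^{(1)}(\sigma)+\hat S^{(2)}(\sigma)$ where the two summands are i.i.d.\ copies built from $\mathbf W^1$ and $\mathbf W^2$ respectively. This already yields the square in the statement; it remains to identify one factor with $\Ex[e^{-\int_0^1 X^{A,\Lambda}_s\,ds}]\,e^{\frac12\operatorname{tr}\Lambda A}$ — note the exponent $\operatorname{tr}\Lambda A$ appears as the square root of $e^{2\operatorname{tr}\Lambda A}$ that should come out when one factors the $\det$ appropriately.

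Next I would compute $\Ex[e^{-\int_0^1 X^{A,\Lambda}_s\,ds}]$ directly, where $X^{A,\Lambda}_s=\langle(\Lambda-A\Lambda A)\xi_s,\xi_s\rangle$ and $\xi$ solves the linear SDE \eqref{OUn}. This is a quadratic Wiener functional of a Gaussian process, so by the standard Cameron–Martin / Girsanov computation for Ornstein–Uhlenbeck integrals (the same circle of ideas underlying L\'evy's formula used in Theorem \ref{thm01}) its Laplace transform is a Carleman–Fredholm-type determinant. Concretely I expect it to equal $\det(\text{something}(\Lambda,A))^{-1/2}$ times an exponential correction $e^{-\frac12\operatorname{tr}(\cdots)}$ coming from the drift term $\Lambda A$ in \eqref{OUn} via Girsanov. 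The goal is to massage this expression so that, after squaring and multiplying by $e^{2\operatorname{tr}\Lambda A}$, it matches $\det(\cosh\Lambda+A\sinh\Lambda)^{-1}$, which is $(\Ex[e^{\hat S_{A,\Lambda}(\sqrt{-1})}])^{-1\cdot(-1)}$ by Theorem \ref{thm01}. The cleanest route may be to avoid recomputing from scratch and instead start from \eqref{area04} with $C^-=0$,
\begin{equation*}
\Ex[e^{\hat S_{A,\Lambda}(\sqrt{-1})}] = \det\bigl(\cos(\sqrt{-1}\Lambda)+\sqrt{-1}\,A\sin(\sqrt{-1}\Lambda)\bigr)^{-1} = \det(\cosh\Lambda - A\sinh\Lambda)^{-1},
\end{equation*}
and separately show that $\Ex[e^{-\int_0^1 X_s\,ds}]^2 e^{\operatorname{tr}\Lambda A}$ equals the same determinant; then the two halves meet in the middle.

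The main obstacle I anticipate is the bookkeeping in the Ornstein–Uhlenbeck Laplace-transform computation: getting the drift correction from \eqref{OUn} right (the $\Lambda A$ term is not symmetric in general even though $A$ is, because $\Lambda$ and $A$ need not commute), and then verifying that the resulting matrix inside the determinant is exactly $\cosh\Lambda - A\sinh\Lambda$ (or its square, pre-factoring). A convenient device is to diagonalize the situation via the fundamental solution $e^{t\Lambda A}$ of the deterministic part of \eqref{OUn}: then $\xi_s = \int_0^s e^{(s-r)\Lambda A}\Lambda^{1/2}\,dB_r$, the quadratic functional $\int_0^1 X_s\,ds$ becomes $\int_0^1\!\!\int_0^1 B\text{-integrals}$ against an explicit kernel, and its Laplace transform is the Fredholm determinant of that kernel perturbed by $\Lambda-A\Lambda A$. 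Evaluating this explicitly should reproduce a hyperbolic-function determinant; matching constants (the $e^{\operatorname{tr}\Lambda A}$ and the factor of $2^{\pm n}$ that tends to appear) is the last routine—but error-prone—step. Once the identification of one factor is done, the square and the final equality follow immediately from the block-diagonal splitting noted above together with Theorem \ref{thm01}.
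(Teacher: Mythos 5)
Your opening step is fine: when $A$ is symmetric, $C^-=0$, the quadratic form splits over the two components $\mathbf{W}^1,\mathbf{W}^2$, and independence gives the square — this parallels the paper's product over $i=1,2$. The genuine gap is in the second half. The proposition asserts an identity between the stochastic-area expectation and the Laplace transform of the quadratic Ornstein--Uhlenbeck functional, and your plan is to evaluate \emph{both} sides in closed form and ``meet in the middle''; but the closed-form evaluation of $\Ex[e^{-\int_0^1\langle(\Lambda-A\Lambda A)\xi_s,\xi_s\rangle\,ds}]$ is exactly the hard part, and you only conjecture its outcome (``its Laplace transform is a Carleman--Fredholm-type determinant \dots should reproduce a hyperbolic-function determinant''). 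Since $\Lambda$ and $A$ need not commute, that evaluation amounts to solving a matrix Riccati equation with non-symmetric drift $\Lambda A$ and potential $\Lambda-A\Lambda A$, and showing that its solution produces precisely $\det(\cosh\Lambda\pm A\sinh\Lambda)^{-1/2}$ together with the exact prefactor $e^{\frac12\mathrm{tr}\,\Lambda A}$ — which is the content of the proposition itself. In a determinant-matching argument the constant and sign bookkeeping cannot be left as ``routine'': note already the tension between your reading of \eqref{area04} at $\sigma=\sqrt{-1}$ (giving $\cosh\Lambda-A\sinh\Lambda$) and the statement of Theorem \ref{thm01} (with $+A\sinh\Lambda$), which your sketch does not resolve.

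The paper's proof avoids computing any determinant for the OU side. It conditions on $\mathbf{W}_1$ and uses the fact that the conditional expectation of $e^{\sqrt{-1}\sum_l\lambda_l S^l}$ coincides with that of $e^{-\sum_l\frac{\lambda_l^2}{2}\int_0^1\{(W^{l,1}_s)^2+(W^{l,2}_s)^2\}ds}$, both being the right-hand side of \eqref{LSAF2} continued to $\sigma=\sqrt{-1}$; this replaces the area by a time-integral of the squared path and yields the square of a single $n$-dimensional expectation. Then It\^o's formula applied to the boundary term $\frac12\langle\Lambda^{\frac12}A\Lambda^{\frac12}\mathbf{W}^1_1,\mathbf{W}^1_1\rangle$ produces the factor $e^{\frac12\mathrm{tr}\,\Lambda A}$ and an exponential martingale, and the Maruyama--Girsanov theorem turns $\Lambda^{\frac12}\mathbf{W}^1$ under the new measure into the OU process \eqref{OUn}, so the remaining expectation is literally the OU functional in the statement — no Riccati or Fredholm computation is needed on either side. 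To repair your proposal you would either have to carry out the Riccati/Fredholm evaluation in full (with non-commuting $\Lambda,A$), or run the change-of-measure step, which is the paper's argument.
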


\begin{proof}
We first note the following identity since its right-hand-side 
also equals to that of (\ref{LSAF2}) with $ \sigma $ replaced by $ \sqrt{-1} $
(see e.g. \cite{MR2454984}):
\begin{equation*}
\Ex [e^ { \sqrt{-1} \sum_l \lambda_l S^l } | \mathbf{W}_1]= 
\Ex [e^{-\sum_l \frac{\lambda_l^2}{2} 
\int_0^1 \{(W^{l,1}_s)^2 + (W^{l,2}_s)^2\}\,ds} | \mathbf{W}_1  ].
\end{equation*}
Then since $ C^+ = A $ and $ C^- = 0 $, we have
\begin{equation*}
\begin{split}
\Ex [e^{\hat S_{A,\Lambda}(\sqrt{-1})}] 
&= \prod_{i=1,2} \Ex [e^{-\sum_l \frac{\lambda_l^2}{2} 
\int_0^1 (W^{l,i}_s)^2 \,ds 
+ \frac{1}{2} \langle \Lambda^{\frac{1}{2}}  A \Lambda^{\frac{1}{2}} 
\mathbf{W}^i_1,\mathbf{W}^i_1 \rangle }] \\
&=\left( \Ex [e^{-\sum_l \frac{\lambda_l^2}{2} 
\int_0^1 (W^{l,1}_s)^2 \,ds 
+ \frac{1}{2} \langle \Lambda^{\frac{1}{2}} 
A \Lambda^{\frac{1}{2}} \mathbf{W}^1_1,\mathbf{W}^1_1 \rangle }]
\right)^2.
\end{split}
\end{equation*}
By applying It\^o's formula, 
\begin{equation*}
\begin{split}
& e^{-\sum_l \frac{\lambda_l^2}{2} 
\int_0^1 (W^{l,1}_s)^2 \,ds +
\frac{1}{2} \langle \Lambda^{\frac{1}{2}} 
A \Lambda^{\frac{1}{2}} \mathbf{W}^1_1,\mathbf{W}^1_1 \rangle } \\
&= e^{\frac{1}{2} \mathrm{tr} \Lambda A} 
e^{\int_0^1 \langle \Lambda^{\frac{1}{2}} 
A \Lambda^{\frac{1}{2}} \mathbf{W}^1_s,d \mathbf{W}^1_s \rangle 
- \frac{1}{2} \int_0^1 | \Lambda^{\frac{1}{2}} A \Lambda^{\frac{1}{2}}  \mathbf{W}^1_s |^2 \,ds}
e^{- \frac{1}{2} 
\int_0^1 \langle (\Lambda - A \Lambda A) 
\Lambda^{\frac{1}{2}}\mathbf{W}^1_s, \Lambda^{\frac{1}{2}}
\mathbf{W}^1_s \rangle \,ds}.
\end{split}
\end{equation*}
Define $ Q $ by
\begin{equation*}
\frac{dQ}{dP} \bigg|_{\mathcal{F}_1} 
= e^{\int_0^1 \langle \Lambda^{\frac{1}{2}} 
A \Lambda^{\frac{1}{2}} \mathbf{W}^1_s,d \mathbf{W}^1_s \rangle 
- \frac{1}{2} \int_0^1 | \Lambda^{\frac{1}{2}} A \Lambda^{\frac{1}{2}}  \mathbf{W}^1_s |^2 \,ds}.
\end{equation*}
Then by the Maruyama-Girsanov theorem, we see that
$ \mathbf{W} $ under $ Q $ has the same law
as $ \xi $ of (\ref{OUn}). This completes the proof. 
\end{proof}

\section{A realization in 2D-Wiener space}\label{2D}
In this section, we shall show that
the $ 2n $-dimensional Brownian motion used 
to represent $ n $-solitons in Theorem \ref{thm01}
can be replaced by a $ 2 $-dimensional one irrespective of $ n $. 
This observation suggests that the whole Sato's Grassmannian would be
realized in 2-dimensional Wiener space. 

Let $W$ $\equiv$ $(W^1,W^2)$ be a $ 2 $-dimensional Brownian motion 
starting at the origin, 
$ \{ e^i = (e^i_1,\cdots e^i_n) , i =1, \cdots n \} $ 
be an orthonormal basis of $ \R^n $, 
and set 
\begin{equation*}
f_i (t) := \sqrt{n} \sum_{l=1}^n e^i_l 1_{[\frac{l-1}{n}, \frac{l}{n})}(t),
\quad i=1,2, \cdots, n.
\end{equation*}  
Define 
\begin{equation*}
S_{i,j}^+ 
:= \sum_{a=1,2} \left( \int_0^1 f_i(t) \,dW^a_t\right)
\left( \int_0^1 f_j (t) \,dW^a_t\right),
\end{equation*}
and
\begin{equation*}
S_{i, j}^- 
:=  \int_0^1 \left(\int_0^t f_j (s) \,d W^2_s \right) f_i(t) \,dW^1_t 
- \int_0^1 \left(\int_0^t f_i (s) \,dW^1_s  \right) f_j (t) \,dW^2_t. 
\end{equation*}

In this section we assume that $ \lambda_i > 0 $ for all $ i $. 
We shall denote the $ (i,j) $ entry of the matrices 
$ \Lambda^{\frac{1}{2}} C^+ \Lambda^{\frac{1}{2}} $ and
$ \Lambda^{\frac{1}{2}} (I+ C^-) \Lambda^{\frac{1}{2}} $
by $ \lambda^+_{i,j} $,
and $ \lambda^-_{i,j} $, respectively. 
Note that $ \lambda^-_{ii} = \lambda_i $. 
We assume that either $ \max_l |\lambda_l| $ or $ \Vert C^+ \Vert $ 
is sufficiently small to ensure the integrability. 
\begin{prop}\label{F1}
We have that
\begin{equation*}
\begin{split}
\Ex[e^{ \sum_{i,j} \left(\sqrt{-1} \lambda^-_{i,j} S^-_{i,j}
+\frac{1}{2} \lambda^{+}_{i,j} 
S^+_{i,j} \right)}]
&(= \Ex [e^{\hat S_{A, \Lambda} (\sqrt{-1})}]) \\
&= \det (\cosh \Lambda + A \sinh \Lambda)^{-1}.
\end{split}
\end{equation*}
\end{prop}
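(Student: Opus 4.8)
The plan is to realise the $2n$ mutually independent two-dimensional Brownian motions used in Theorem \ref{thm01} inside the two-dimensional Wiener space by splitting $W$ over the intervals $I_l:=[\tfrac{l-1}{n},\tfrac{l}{n})$, and then to imitate the computation of Theorem \ref{thm01}.

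First I would set, for $s\in[0,1]$, $l=1,\dots,n$, $a=1,2$, $W^{l,a}_s:=\sqrt{n}\bigl(W^a_{(l-1+s)/n}-W^a_{(l-1)/n}\bigr)$; these are $2n$ mutually independent standard Brownian motions. Writing $u^a_l:=W^{l,a}_1$, $\mathbf{W}^a_1:=(u^a_1,\dots,u^a_n)$ and $S^l:=\int_0^1(W^{l,2}_s\,dW^{l,1}_s-W^{l,1}_s\,dW^{l,2}_s)$ — the very areas occurring in Theorem \ref{thm01} — and using that $f_i$ is the constant $\sqrt{n}\,e^i_l$ on $I_l$, a short computation gives $\int_0^1 f_i\,dW^a=\sum_l e^i_l u^a_l=\langle e^i,\mathbf{W}^a_1\rangle$. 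Hence $S^+_{i,j}$ is a quadratic form in $\mathbf{W}_1:=(\mathbf{W}^1_1,\mathbf{W}^2_1)$, and — the crucial point — $S^-_{i,j}=\sum_l e^i_l e^j_l\,S^l+(\text{a bilinear form in }\mathbf{W}^1_1,\mathbf{W}^2_1)$, with only the \emph{diagonal} products $e^i_l e^j_l$ in front of the areas, since the stochastic area of the $l$-th piece involves that piece alone. Let $E$ be the orthogonal matrix with columns $e^1,\dots,e^n$. Using $v^{\top}C^-v=0$, the coefficient of $S^l$ in the exponent $Y:=\sum_{i,j}\bigl(\sqrt{-1}\,\lambda^-_{i,j}S^-_{i,j}+\tfrac12\lambda^+_{i,j}S^+_{i,j}\bigr)$ collapses to $\sqrt{-1}\,\mu_l$ with $\mu_l:=(E\Lambda E^{\top})_{ll}=\sum_k\lambda_k(e^k_l)^2$, and one obtains $Y=\sqrt{-1}\sum_l\mu_l S^l+Q(\mathbf{W}_1)$ for an explicit quadratic form $Q$ on $\mathbf{R}^{2n}$ whose coefficients are entries of $E\Lambda^{\frac12}C^{+}\Lambda^{\frac12}E^{\top}$, $E\Lambda^{\frac12}C^{-}\Lambda^{\frac12}E^{\top}$ and $E\Lambda E^{\top}$.

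Next I would condition on $\mathbf{W}_1$. Given $\mathbf{W}_1$ the $S^l$ are conditionally independent and each is governed by L\'evy's formula (\ref{Levy02}); continuing analytically to the imaginary argument exactly as in the proof of Theorem \ref{thm01} (integrability being the hypothesis already imposed) gives $\Ex[e^Y\mid\mathbf{W}_1]=\bigl(\prod_l\tfrac{\sqrt{-1}\mu_l}{\sin\sqrt{-1}\mu_l}\bigr)\exp\bigl(-\tfrac12\langle R\,\mathbf{W}_1,\mathbf{W}_1\rangle\bigr)$, with $R$ a symmetric $2n\times 2n$ matrix of block form $\begin{pmatrix}X & *\\ * & X\end{pmatrix}$, $X=\mathrm{diag}(\sqrt{-1}\mu_l\cot\sqrt{-1}\mu_l)-E\Lambda^{\frac12}C^+\Lambda^{\frac12}E^{\top}$, the off-diagonal blocks built from $E\Lambda E^{\top}$ and $E\Lambda^{\frac12}C^-\Lambda^{\frac12}E^{\top}$. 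Integrating out the standard Gaussian $\mathbf{W}_1$ yields $\Ex[e^Y]=\bigl(\prod_l\tfrac{\sqrt{-1}\mu_l}{\sin\sqrt{-1}\mu_l}\bigr)\det(R)^{-1/2}$. Finally I would evaluate $\det R$ by the block-matrix manipulations that take (\ref{area03}) to (\ref{area04}) (conjugation by $\begin{pmatrix}I & \sqrt{-1}I\\ \sqrt{-1}I & I\end{pmatrix}$, etc.), pull the orthogonal factor $E$ out of the resulting $n\times n$ determinants via $\det(EME^{\top})=\det M$, absorb the scalar prefactor through $\prod_l\tfrac1{\sin\sqrt{-1}\mu_l}$, and identify the outcome as $\det(\cosh\Lambda+A\sinh\Lambda)^{-1}$; the asserted equality with $\Ex[e^{\hat S_{A,\Lambda}(\sqrt{-1})}]$ is then Theorem \ref{thm01} itself.

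I expect the last step to be the real work: one must verify that the additional cross terms created by the orthonormal-basis mixing — the skew contribution coming from the non-diagonality of $E\Lambda E^{\top}$ together with the $E\Lambda^{\frac12}C^-\Lambda^{\frac12}E^{\top}$ part — recombine, after the block manipulation, into exactly $\cosh\Lambda+A\sinh\Lambda$ conjugated by $E$, so that the $E$'s cancel in the determinant and the $\mu_l$'s are replaced by the $\lambda_l$'s. All other ingredients (integrability, the analytic continuation, the hyperbolic identities) are identical to those in Theorem \ref{thm01} and raise no new difficulty.
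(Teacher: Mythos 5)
Your set-up coincides with the first half of the paper's proof: you realize the $2n$ independent Brownian motions by splitting $W$ over the intervals $[\tfrac{l-1}{n},\tfrac{l}{n})$ and rescaling, you expand $S^{\pm}_{i,j}$ into within-block areas plus quadratic expressions in the endpoints $\mathbf{W}_1$, and you correctly note that only the diagonal products $e^i_le^j_l$ multiply the areas. The divergence --- and the gap --- is in what you do next. The paper never introduces your frequencies $\mu_l=(E\Lambda E^{*})_{ll}$: before computing any expectation it rewrites the combination of the diagonal areas and the symmetric endpoint cross terms as the matrix stochastic integrals $\int_0^1\langle E\Lambda E^{*}\mathbf{W}^2_t,d\mathbf{W}^1_t\rangle-\int_0^1\langle E\Lambda E^{*}\mathbf{W}^1_t,d\mathbf{W}^2_t\rangle$ plus the skew endpoint form $\langle E\Lambda^{\frac12}C^-\Lambda^{\frac12}E^{*}\mathbf{W}^1_1,\mathbf{W}^2_1\rangle$ (and treats the $S^+$ part in parallel), and then uses rotational invariance of Brownian motion --- $E^{*}\mathbf{W}^a$, $a=1,2$, are again independent $n$-dimensional Brownian motions --- to strip the conjugation by the orthogonal matrix $E$ (the paper's $T$) from \emph{both} components jointly. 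This yields the equality in law of the pair $\bigl(\sum\lambda^-_{i,j}S^-_{i,j},\ \sum\lambda^+_{i,j}S^+_{i,j}\bigr)$ with the pair of ingredients of $\hat S_{A,\Lambda}(\sqrt{-1})$ with the \emph{original} $\lambda_l$'s, after which the conditional L\'evy formula and the Gaussian integration are exactly those of Theorem \ref{thm01}; no $\mu_l$ ever appears.

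Your route conditions on $\mathbf{W}_1$ first, and that is where the argument breaks. The conditioning produces the prefactor $\prod_l\mu_l/\sinh\mu_l$ and a matrix $R$ whose blocks contain $\mathrm{diag}(\mu_l\coth\mu_l)$ alongside $E\Lambda^{\frac12}C^{+}\Lambda^{\frac12}E^{*}$, $E\Lambda^{\frac12}C^{-}\Lambda^{\frac12}E^{*}$ and the off-diagonal part of $E\Lambda E^{*}$. The manipulations taking (\ref{area03}) to (\ref{area04}) work because every matrix there is either a function of the single diagonal matrix $\Lambda$ or a sandwich $\Lambda^{\frac12}C^{\pm}\Lambda^{\frac12}$; here $\mathrm{diag}(\mu_l\coth\mu_l)$ is \emph{not} of the form $E\,f(\Lambda)\,E^{*}$ (taking $\coth$ of the diagonal entries of $E\Lambda E^{*}$ is not conjugation-equivariant), so the step $\det(EME^{*})=\det M$ never becomes applicable, the $E$'s do not cancel, and $\prod_l\sinh\mu_l$ cannot be traded for $\det\sinh\Lambda$. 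The collapse you defer to the last step --- that the scalar prefactor times $\det R^{-1/2}$ equals $\det(\cosh\Lambda+A\sinh\Lambda)^{-1}$ --- is a transcendental identity mixing the $\mu_l$'s with the $\lambda_l$'s; it is true only because both sides represent the same Wiener functional, and already for $A=0$, $n=2$ it is not reachable by the hyperbolic/block identities you invoke. So the proposal postpones precisely the crux and the mechanism proposed for it would fail; to close the proof, replace the conditioning step by the joint identification in law via orthogonal invariance, i.e.\ the paper's argument, and then quote Theorem \ref{thm01}.
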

\begin{proof}
We will show the following equivalence in law:
\begin{equation*}
\begin{split}
& ( \sum_{i,j} \lambda^-_{i,j} S^-_{i,j}, \sum_{i,j} 
\lambda^{+}_{i,j} 
S^+_{i,j}) \\
&\overset{\mathrm{d}}{=}
\left( \sum^{n}_{l=1} {\lambda}_l S^l
+ \langle \Lambda^{\frac{1}{2}}
C^-\Lambda^{\frac{1}{2}}
\mathbf{W}_1^1, \mathbf{W}_1^2 \rangle_{\mathbf{R}^{n}}, 
\sum_{i=1,2} \langle \Lambda^{\frac{1}{2}}
C^+ \Lambda^{\frac{1}{2}} \mathbf{W}_1^i, \mathbf{W}_1^i
   \rangle_{\mathbf{R}^{n}} \right).
\end{split}
\end{equation*}
Let us start with the following direct calculation:
\begin{equation*}
\begin{split}
& S_{i,j}^+ = n \sum_{a=1,2}
\Bigl(
\sum_{k,l} e^i_{l} e^j_{k} (W^a_{k/n}-W^a_{(k-1)/n})(W^a_{l/n}-W^a_{(l-1)/n})
\Bigr),
\end{split}
\end{equation*}
and
\begin{equation*}
\begin{split}
& S_{i, j}^- = n \sum_{l=1}^{n} e_l^i e_l^j 
\int_{(l-1)/n}^{l/n} \left\{(W_{t}^2-W_{(l-1)/n}^2) dW_{t}^1 
- (W_{t}^1-W_{(l-1)/n}^1 ) dW_{t}^2 \right\} \\
& \hspace{2cm}+ n \sum_{k<l} 
\big\{ e_{k}^i e_{l}^j (W_{k/n}^2-W_{(k-1)/n}^2)
 (W_{l/n}^1-W_{(l-1)/n}^1)   \\
& \hspace{4cm} -  e^i_l e^j_k (W_{k/n}^1-W_{(k-1)/n}^1) 
(W_{l/n}^2-W_{(l-1)/n}^2) \big\}.
\end{split}
\end{equation*}
By the scaling property of the Brownian motion, the process
\begin{equation*}
\{ (W^1_{(s + l-1)/n}-W^1_{(l-1)/n}, 
W^2_{(s + l-1)/n}
- W^2_ {(l-1)/n})  : 0 \leq s \leq 1, l=1,\cdots,n \}
\end{equation*}
is identically distributed as
\begin{equation*}
\{ n^{-1/2}  
(W_s^{l,1}, W_s^{l,2} ): 0 \leq s \leq 1,l=1,\cdots,n \}
\end{equation*}
Here
$ \{ W^{l,1}, W^{l,2}, l = 1, \cdots, n\} $
are $2n$-dimensional Brownian motions
staring at the origin.
In particular, 
\begin{equation*}
\left\{ 
\int_{(l-1)/n}^{l/n} \{ (W_{t}^2-W_{(l-1)/n}^2) dW_{t}^1 
- (W_{t}^1-W_{(l-1)/n}^1 ) dW_{t}^2,  \}, l=1, \cdots, n \right\}
\end{equation*}
is identically distributed as
\begin{equation*}
\left\{ n^{-1} \int_{0}^1 
\left( W_{t}^{l,2} dW_{t}^{l,1} 
- W_{t}^{l,1} dW_{t}^{l,2} \right),
l=1, \cdots, n
\right\}, 
\end{equation*}
and 
\begin{equation*}
\left\{ 
(W_{k/n}^1-W_{(k-1)/n}^1) (W_{l/n}^2-W_{(l-1)/n}^2), 
1 \leq k,l \leq n
\right\}
\end{equation*}
is identically distributed as
$ \{ W^{k,1}_1 W^{l,2}_1 , 1 \leq k,l \leq n \}$.
Therefore, we have the following identity in law:
\begin{equation}\label{idl0}
\begin{split}
\sum_{i,j} \lambda^+_{i,j} S_{i, j}^+ 
&\overset{\mathrm{d}}{=} 
\sum_{a=1,2} 
\sum_{k,l} \sum_{i,j} \lambda^+_{i,j} e^i_l e^j_k W^{k,a}_1 W^{l,a}_1 \\
&= \sum_{a=1,2} \langle T^* \Lambda^{\frac{1}{2}} C^+
\Lambda^{\frac{1}{2}} T \mathbf{W}^a_1, \mathbf{W}^a_1 \rangle \\
& \overset{\mathrm{d}}{=} \sum_{a=1,2} \langle  \Lambda^{\frac{1}{2}} C^+
\Lambda^{\frac{1}{2}} \mathbf{W}^a_1, \mathbf{W}^a_1 \rangle
\end{split}
\end{equation}
and 
\begin{equation}\label{idl1}
\begin{split}
\sum_{i,j} \lambda^-_{i,j} S_{i, j}^- 
& \overset{\mathrm{d}}{=} 
\sum_{l=1}^{n} \sum_{i,j} \lambda^-_{i,j} 
e^i_l e^j_l  \int_{0}^{1} \left( W_{t}^{l,2} dW_{t}^{l,1} 
- W_{t}^{l,1} dW_{t}^{l,2} \right)  \\
& \quad + \sum_{k<l} \sum_{i,j} \lambda^-_{i,j} e^i_k e^j_l 
W^{k,2}_1 W^{l,1}_1 
- \sum_{i,j} \lambda^-_{i,j} e^i_l e^j_k W^{k,1}_1 W_1^{l,2} \\
& = \int_0^1 \langle T^*\Lambda T \mathbf{W}_{t}^{2}, 
d\mathbf{W}_{t}^1 \rangle
- \int_0^1 \langle T^*\Lambda T \mathbf{W}_{t}^{1}, 
d\mathbf{W}_{t}^2 \rangle \\
& \quad + \langle T^* \Lambda^{\frac{1}{2}} C^-
\Lambda^{\frac{1}{2}} T \mathbf{W}^1_1, \mathbf{W}^2_1 
\rangle \\
& \overset{\mathrm{d}}{=} 
 \sum_{l=1}^{n} \lambda_l \int_{0}^{1} \left( W_{t}^{l,2} dW_{t}^{l,1} 
- W_{t}^{l,1} dW_{t}^{l,2} \right)  \\
& \quad + \langle \Lambda^{\frac{1}{2}} C^-
\Lambda^{\frac{1}{2}} \mathbf{W}^1_1, \mathbf{W}^2_1 
\rangle.
\end{split}
\end{equation}
Here $ \mathbf{W}^a $, $ a=1, 2 $ are $ n $-dimensional Brownian motions,
and $ T:= [ e^1 \cdots e^n ]$, which is an orthogonal matrix
since $ \{e^1 \cdots e^n \} $ is an orthonormal basis.  
Note that $ \Lambda^{\frac{1}{2}} C^-
\Lambda^{\frac{1}{2}} $ is also skew-symmetric and thus 
$ (\Lambda^{\frac{1}{2}} C^-
\Lambda^{\frac{1}{2}})_{ii}=0 $ for $ i=1,\cdots, n $. 

Using the equivalences (\ref{idl0}) and (\ref{idl1}) in law, 
we can establish 
\begin{equation*}
\Ex[e^{ \sum_{i,j} \left(\sqrt{-1} \lambda^-_{i,j} S^-_{i,j}
+\frac{1}{2} \lambda^{+}_{i,j} 
S^+_{i,j} \right)}]
= \det \Lambda 
\det(\cosh \Lambda + \Lambda^{\frac{1}{2}} A \Lambda^{\frac{1}{2}} \sinh \Lambda)^{-1}
\end{equation*}
in the same manner as we have done in the proof of Theorem \ref{thm01}.
Since $ \Lambda^{\frac{1}{2}} $ and $ \sinh \Lambda $ commute, 
we have the assertion. 
\end{proof}
\begin{bibdiv}
\begin{biblist}
\bib{MR898500}{article}{
   author={Biane, Ph.},
   author={Yor, M.},
   title={Variations sur une formule de Paul L\'evy},
   language={French, with English summary},
   journal={Ann. Inst. H. Poincar\'e Probab. Statist.},
   volume={23},
   date={1987},
   number={2, suppl.},
   pages={359--377},
   issn={0246-0203},
   review={\MR{898500 (88k:60150)}},
}

\bib{MR1007231}{article}{
   author={Duplantier, B.},
   title={Areas of planar Brownian curves},
   journal={J. Phys. A},
   volume={22},
   date={1989},
   number={15},
   pages={3033--3048},
   issn={0305-4470},
   review={\MR{1007231 (91a:60209a)}},
}

\bib{GGKM}{article}{
  title = {Method for Solving the Korteweg-deVries Equation},
  author = {Gardner, Clifford S.},
  author={Greene, John M.},
  author={Kruskal, Martin D.},
  author={Miura, Robert M.},
  journal = {Phys. Rev. Lett.},
  volume = {19},
  number = {19},
  pages = {1095--1097},
  numpages = {2},
  year = {1967},
  month = {Nov},
  doi = {10.1103/PhysRevLett.19.1095},
  publisher = {American Physical Society}
}

\bib{MR0461589}{article}{
   author={Gaveau, Bernard},
   title={Principe de moindre action, propagation de la chaleur et
   estim\'ees sous elliptiques sur certains groupes nilpotents},
   journal={Acta Math.},
   volume={139},
   date={1977},
   number={1-2},
   pages={95--153},
   issn={0001-5962},
   review={\MR{0461589 (57 \#1574)}},
}

\bib{MR1869989}{article}{
   author={Hara, Keisuke},
   author={Ikeda, Nobuyuki},
   title={Quadratic Wiener functionals and dynamics on Grassmannians},
   note={Rencontre Franco-Japonaise de Probabilit\'es (Paris, 2000)},
   journal={Bull. Sci. Math.},
   volume={125},
   date={2001},
   number={6-7},
   pages={481--528},
   issn={0007-4497},
   review={\MR{1869989 (2002j:60151)}},
   doi={10.1016/S0007-4497(01)01097-1},
}

\bib{MR1335477}{article}{
   author={Ikeda, N.},
   author={Kusuoka, S.},
   author={Manabe, S.},
   title={L\'evy's stochastic area formula and related problems},
   conference={
      title={Stochastic analysis},
      address={Ithaca, NY},
      date={1993},
   },
   book={
      series={Proc. Sympos. Pure Math.},
      volume={57},
      publisher={Amer. Math. Soc.},
      place={Providence, RI},
   },
   date={1995},
   pages={281--305},
   review={\MR{1335477 (96f:60092)}},
}

\bib{MR1266245}{article}{
   author={Ikeda, Nobuyuki},
   author={Kusuoka, Sigeo},
   author={Manabe, Shojiro},
   title={L\'evy's stochastic area formula for Gaussian processes},
   journal={Comm. Pure Appl. Math.},
   volume={47},
   date={1994},
   number={3},
   pages={329--360},
   issn={0010-3640},
   review={\MR{1266245 (95h:60086)}},
   doi={10.1002/cpa.3160470306},
}

\bib{MR2083709}{article}{
   author={Ikeda, Nobuyuki},
   author={Taniguchi, Setsuo},
   title={Quadratic Wiener functionals, Kalman-Bucy filters, and the KdV
   equation},
   conference={
      title={Stochastic analysis and related topics in Kyoto},
   },
   book={
      series={Adv. Stud. Pure Math.},
      volume={41},
      publisher={Math. Soc. Japan},
      place={Tokyo},
   },
   date={2004},
   pages={167--187},
   review={\MR{2083709 (2005h:60158)}},
}

\bib{MR2603056}{article}{
   author={Ikeda, Nobuyuki},
   author={Taniguchi, Setsuo},
   title={The It\^o-Nisio theorem, quadratic Wiener functionals, and
   1-solitons},
   journal={Stochastic Process. Appl.},
   volume={120},
   date={2010},
   number={5},
   pages={605--621},
   issn={0304-4149},
   review={\MR{2603056 (2011c:60170)}},
   doi={10.1016/j.spa.2010.01.009},
}

\bib{IW}{book}{
   author={Ikeda, Nobuyuki},
   author={Watanabe, Shinzo},
   title={Stochastic differential equations and diffusion processes},
   series={North-Holland Mathematical Library},
   volume={24},
   publisher={North-Holland Publishing Co.},
   place={Amsterdam},
   date={1981},
   pages={xiv+464},
   isbn={0-444-86172-6},
   review={\MR{637061 (84b:60080)}},
}

\bib{Kotani}{article}{
   author={Kotani, S.},
   title={Probabilistic approach to reflectionless potentials},
   conference={
	title={Symposium on random matrices and related topics},
   	address={Tsukuba University},
	date={2000}
		},
}

\bib{MR2485241}{article}{
   author={Kotani, S.},
   title={KdV flow on generalized reflectionless potentials},
   language={English, with English and Ukrainian summaries},
   journal={Zh. Mat. Fiz. Anal. Geom.},
   volume={4},
   date={2008},
   number={4},
   pages={490--528, 574},
   issn={1812-9471},
   review={\MR{2485241 (2009m:37194)}},
}

\bib{MR0235310}{article}{
   author={Lax, Peter D.},
   title={Integrals of nonlinear equations of evolution and solitary waves},
   journal={Comm. Pure Appl. Math.},
   volume={21},
   date={1968},
   pages={467--490},
   issn={0010-3640},
   review={\MR{0235310 (38 \#3620)}},
}

\bib{MR0044774}{article}{
   author={L{\'e}vy, Paul},
   title={Wiener's random function, and other Laplacian random functions},
   conference={
      title={Proceedings of the Second Berkeley Symposium on Mathematical
      Statistics and Probability, 1950},
   },
   book={
      publisher={University of California Press},
      place={Berkeley and Los Angeles},
   },
   date={1951},
   pages={171--187},
   review={\MR{0044774 (13,476b)}},
}

\bib{MR2454984}{book}{
   author={Mansuy, Roger},
   author={Yor, Marc},
   title={Aspects of Brownian motion},
   series={Universitext},
   publisher={Springer-Verlag},
   place={Berlin},
   date={2008},
   pages={xiv+195},
   isbn={978-3-540-22347-4},
   review={\MR{2454984 (2010a:60278)}},
   doi={10.1007/978-3-540-49966-4},
}

\bib{MR1736222}{book}{
   author={Miwa, T.},
   author={Jimbo, M.},
   author={Date, E.},
   title={Solitons},
   series={Cambridge Tracts in Mathematics},
   volume={135},
   note={Differential equations, symmetries and infinite-dimensional
   algebras;
   Translated from the 1993 Japanese original by Miles Reid},
   publisher={Cambridge University Press},
   place={Cambridge},
   date={2000},
   pages={x+108},
   isbn={0-521-56161-2},
   review={\MR{1736222 (2001a:37109)}},
}

\bib{NMPZ}{book}{
   author={Novikov, S.},
   author={Manakov, S. V.},
   author={Pitaevski{\u\i}, L. P.},
   author={Zakharov, V. E.},
   title={Theory of solitons},
   series={Contemporary Soviet Mathematics},
   note={The inverse scattering method;
   Translated from the Russian},
   publisher={Consultants Bureau [Plenum]},
   place={New York},
   date={1984},
   pages={xi+276},
   isbn={0-306-10977-8},
   review={\MR{779467 (86k:35142)}},
}

\bib{MR1013125}{article}{
   author={Sato, Mikio},
   title={The KP hierarchy and infinite-dimensional Grassmann manifolds},
   conference={
      title={Theta functions---Bowdoin 1987, Part 1},
      address={Brunswick, ME},
      date={1987},
   },
   book={
      series={Proc. Sympos. Pure Math.},
      volume={49},
      publisher={Amer. Math. Soc.},
      place={Providence, RI},
   },
   date={1989},
   pages={51--66},
   review={\MR{1013125 (91d:58107)}},
}

\bib{MR730247}{article}{
   author={Sato, Mikio},
   author={Sato, Yasuko},
   title={Soliton equations as dynamical systems on infinite-dimensional
   Grassmann manifold},
   conference={
      title={Nonlinear partial differential equations in applied science
      (Tokyo, 1982)},
   },
   book={
      series={North-Holland Math. Stud.},
      volume={81},
      publisher={North-Holland},
      place={Amsterdam},
   },
   date={1983},
   pages={259--271},
   review={\MR{730247 (86m:58072)}},
}

 \bib{MR1749870}{article}{
   author={Taniguchi, Setsuo},
   title={L\'evy's stochastic area and the principle of stationary phase},
   journal={J. Funct. Anal.},
   volume={172},
   date={2000},
   number={1},
   pages={165--176},
   issn={0022-1236},
   review={\MR{1749870 (2001g:60135)}},
   doi={10.1006/jfan.1999.3540},
}

\bib{MR2091362}{article}{
   author={Taniguchi, Setsuo},
   title={On Wiener functionals of order 2 associated with soliton solutions
   of the KdV equation},
   journal={J. Funct. Anal.},
   volume={216},
   date={2004},
   number={1},
   pages={212--229},
   issn={0022-1236},
   review={\MR{2091362 (2005k:60172)}},
   doi={10.1016/j.jfa.2003.11.002},
}

\bib{MR2197979}{article}{
   author={Taniguchi, Setsuo},
   title={Brownian sheet and reflectionless potentials},
   journal={Stochastic Process. Appl.},
   volume={116},
   date={2006},
   number={2},
   pages={293--309},
   issn={0304-4149},
   review={\MR{2197979 (2007e:60064)}},
   doi={10.1016/j.spa.2005.09.005},
}

\bib{MR2391539}{article}{
   author={Taniguchi, Setsuo},
   title={Stochastic analysis and the KdV equation},
   conference={
      title={Stochastic analysis and partial differential equations},
   },
   book={
      series={Contemp. Math.},
      volume={429},
      publisher={Amer. Math. Soc.},
      place={Providence, RI},
   },
   date={2007},
   pages={245--256},
   review={\MR{2391539 (2009m:37211)}},
}

\bib{MR580140}{article}{
   author={Yor, Marc},
   title={Remarques sur une formule de Paul L\'evy},
   language={French},
   conference={
      title={Seminar on Probability, XIV},
      address={Paris},
      date={1978/1979},
   },
   book={
      series={Lecture Notes in Math.},
      volume={784},
      publisher={Springer},
      place={Berlin},
   },
   date={1980},
   pages={343--346},
   review={\MR{580140 (82c:60144)}},
}

\end{biblist}
\end{bibdiv}
\end{document}